\numberwithin{equation}{section}
\newtheorem{Theorem}{Theorem}[section]
\newtheorem{Lemma}{Lemma}[section]
\newtheorem{Corollary}{Corollary}[section]
\theoremstyle{definition}
\theoremstyle{remark}
\newtheorem{Example}{Example}[section]
\newcommand{\mes}{\mathop{\rm mes}\nolimits}
\author{A.A. Kon'kov}
\address{Department of Differential Equations,
Faculty of Mechanics and Mathematics,
Mo\-s\-cow Lo\-mo\-no\-sov State University,
Vorobyovy Gory,
Moscow, 119992 Russia.
Center of Nonlinear Problems of Mathematical Physics,
RUDN University,
Miklukho-Maklaya str. 6,
Moscow, 119991 Russia.
}
\email{konkov@mech.math.msu.su}
\author{A.E. Shishkov}
\address{
Center of Nonlinear Problems of Mathematical Physics,
RUDN University,
Mi\-klu\-k\-ho-Mak\-la\-ya str. 6,
Moscow, 117198 Russia.
}
\email{aeshkv@yahoo.com}
\title[On global solutions]{On global solutions of quasilinear second-order elliptic inequalities}
\keywords{Global solutions; Nonlinearity; Blow-up}
\subjclass{35B44, 35B08, 35J30, 35J70}
\date{}
\begin{document}

\begin{abstract}
We consider the inequality
$$
	- \operatorname{div} A (x, \nabla u)
	\ge
	f (u)
	\quad
	\mbox{in } {\mathbb R}^n,
$$
where $n \ge 2$ and $A$ is a Caratheodory function such that
$$
	C_1
	|\xi|^p
	\le
	\xi
	A (x, \xi)
	\quad
	\mbox{and}
	\quad
	|A (x, \xi)|
	\le
	C_2
	|\xi|^{p-1}
$$
with some constants
$C_1 > 0$,
$C_2 > 0$,
and
$p > 1$
for almost all
$x \in {\mathbb R}^n$
and for all
$\xi \in {\mathbb R}^n$.
Our aim is to find exact conditions on the function $f$ guaranteeing that any non-negative solution of this inequality is identically zero.
\end{abstract}

\maketitle

\section{Introduction}

We study non-negative solutions of the inequality
\begin{equation}
	- \operatorname{div} A (x, \nabla u)
	\ge
	f (u)
	\quad
	\mbox{in } {\mathbb R}^n,
	\label{1.1g}
\end{equation}
where $n \ge 2$ and $A$ is a Caratheodory function such that
$$
	C_1
	|\xi|^p
	\le
	\xi
	A (x, \xi)
	\quad
	\mbox{and}
	\quad
	|A (x, \xi)|
	\le
	C_2
	|\xi|^{p-1}
$$
with some constants $C_1 > 0$, $C_2 > 0$, and $p > 1$ for almost all $x \in {\mathbb R}^n$ and for all $\xi \in {\mathbb R}^n$.
It is also assumed that the function $f : [0, \infty) \to [0, \infty)$ does not decrease on the interval $[0, \varepsilon]$ for some real number $\varepsilon > 0$. 

We say that $u \in W_{p, loc}^1 ({\mathbb R}^n)$ is a solution of~\eqref{1.1g} if 
$f (u) \in L_{1, loc} ({\mathbb R}^n)$
and
$$
	\int_{
		{\mathbb R}^n
	}
	A (x, \nabla u)
	\nabla \varphi
	\,
	dx
	\ge
	\int_{
		{\mathbb R}^n
	}
	f (u)
	\varphi
	\,
	dx
$$
for any non-negative function $\varphi \in C_0^\infty ({\mathbb R}^n)$.

A partial case of~\eqref{1.1g} is the inequality
\begin{equation}
	- \Delta_p u \ge f (u)
	\quad
	\mbox{in } {\mathbb R}^n,
	\label{1.1p}
\end{equation}
where
$
	\Delta_p u = \operatorname{div} (|\nabla u|^{p-2} \nabla u)
$
is the $p$-Laplace operator.

Without loss of generality, we can assume that solutions of~\eqref{1.1g} satisfy the relation
\begin{equation}
	\operatorname*{ess\,inf}\limits_{
		{\mathbb R}^n
	}
	u
	=
	0;
	\label{1.2}
\end{equation}
otherwise we replace $u$ by $u - \alpha$, where
\begin{equation}
	\alpha
	=
	\operatorname*{ess\,inf}\limits_{
		{\mathbb R}^n
	}
	u.
	\label{1.3}
\end{equation}
In so doing, the left-hand side of~\eqref{1.1g} obviously does not change and the right-hand side transforms to $f (u + \alpha)$.

The absence of nontrivial global solutions of differential equations and inequalities or, in other words, the blow-up phenomenon, traditionally attracts the attention of mathematicians~[1--11]. 
We obtain exact conditions on the function $f$ guaranteeing that any non-negative solution of~\eqref{1.1g}, \eqref{1.2} is identically zero. 
Let us note that the only relevant case is $n > p$. Really, in the case of $n \le p$, any non-negative solution of the inequality
\begin{equation}
	- \operatorname{div} A (x, \nabla u)
	\ge
	0
	\quad
	\mbox{in } {\mathbb R}^n
	\label{1.4}
\end{equation}
is a constant~\cite{MPbook}.
For $n > p$, in papers~\cite{ASComPDE, AShal}, it was shown that~\eqref{1.1p} has no positive solutions if 
$$
	\operatorname*{lim\,inf}\limits_{\zeta \to + 0}
	\frac{
		f (\zeta)
	}{
		\zeta^{n (p - 1) / (n - p)}
	}
	>
	0.
$$
We manage to strengthen this 
statement
(see Theorems~\ref{T2.1} and~\ref{T2.2}).

\section{Main results}

\begin{Theorem}\label{T2.1}
Let $n > p$ and
\begin{equation}
	\int_0^\varepsilon
	\frac{
		f (\zeta)
		\,
		d\zeta
	}{
		\zeta^{1 + n (p - 1) / (n - p)}
	}
	=
	\infty.
	\label{T2.1.1}
\end{equation}
Then any non-negative solution of~\eqref{1.1g}, \eqref{1.2} is identically zero.
\end{Theorem}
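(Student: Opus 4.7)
The overall strategy is to argue by contradiction from the existence of a non-trivial $u\ge 0$ satisfying~\eqref{1.1g} and~\eqref{1.2}, producing a finite integral bound that clashes with the divergent integral in~\eqref{T2.1.1}. The key tools are a weighted Caccioppoli-type estimate obtained from~\eqref{1.1g} by testing against negative powers of $u+\delta$, a Sobolev/Moser iteration that brings in the critical exponent $q=n(p-1)/(n-p)$, and the weak Harnack inequality for non-negative $p$-supersolutions (applicable because $f\ge 0$ makes $u$ a supersolution of~\eqref{1.4}).

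\emph{Energy estimate.} Fix $\delta>0$ and $\alpha\in(0,p-1)$, and let $\psi\in C_0^\infty(B_{2R})$ be a standard cutoff with $\psi\equiv 1$ on $B_R$ and $|\nabla\psi|\le C/R$. Testing the weak form of~\eqref{1.1g} against $\varphi=(u+\delta)^{-\alpha}\psi^p$ and applying Young's inequality to the structural bounds on $A$ gives
\[
\int_{B_R} f(u)(u+\delta)^{-\alpha}\,dx
+ c_\alpha\int_{B_R}(u+\delta)^{-\alpha-1}|\nabla u|^p\,dx
\le CR^{-p}\int_{B_{2R}}(u+\delta)^{p-1-\alpha}\,dx.
\]
The second left-hand term equals a constant multiple of $\int_{B_R}|\nabla(u+\delta)^{(p-1-\alpha)/p}|^p$, so Sobolev's inequality upgrades this to a reverse H\"older bound for $(u+\delta)^{(p-1-\alpha)/p}$, and a Moser-type iteration extends its reach to weights up to and including the critical value $q$.

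\emph{Passing to the contradiction.} By~\eqref{1.2} there is a sequence of balls $B_R(x_k)$ with $\essinf_{B_R(x_k)} u\to 0$. The weak Harnack inequality then forces $\esssup_{B_R(x_k)} u\to 0$ as well (this is the full Harnack inequality, available once one has the reverse H\"older bound from the preceding step). Returning to the energy inequality and letting $\delta\to 0$ by monotone convergence, the resulting estimate --- after consolidating across $\alpha\in(0,p-1)$ and using a layer-cake representation --- yields a finite upper bound for a quantity comparable to $\int_{B_R(x_k)} f(u)u^{-1-q}\,dx$. Under~\eqref{T2.1.1}, however, that last integral is $+\infty$ whenever $u>0$ on a set of positive measure. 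The contradiction forces $u\equiv 0$ on every such ball $B_R(x_k)$, and the strong minimum principle for $p$-supersolutions propagates the vanishing to all of $\mathbb{R}^n$.

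The main obstacle is extracting \emph{exactly} the weight $\zeta^{-1-q}$ with the critical $q=n(p-1)/(n-p)$: the energy estimate is naturally sub-critical, valid only for $\alpha<p-1$, so reaching the critical exponent requires the Moser iteration with constants tracked sharply as $\gamma=p-1-\alpha$ approaches its endpoints. Arranging the consolidation so that the Dini-type hypothesis~\eqref{T2.1.1} is exploited in its full generality --- rather than in a pointwise form such as $\liminf_{\zeta\to 0^+} f(\zeta)/\zeta^q>0$ as in~\cite{ASComPDE, AShal} --- is precisely where the sharpness of the theorem is used.
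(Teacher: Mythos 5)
Your proposal has a superficial resemblance to the paper's proof --- both use the weak Harnack inequality and both take an energy-type bound as a starting point --- but the route you sketch has several genuine gaps, and in fact it takes a different strategy from the paper's.

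The first problem is the claim that the reverse H\"older bound from the Caccioppoli estimate ``upgrades'' the weak Harnack inequality to a full Harnack inequality, forcing $\operatorname*{ess\,sup}_{B_R(x_k)} u \to 0$. A function satisfying~\eqref{1.1g} with $f\ge 0$ is only a $p$-\emph{supersolution}, and supersolutions satisfy the weak Harnack inequality (controlling $L^\lambda$-averages for small positive $\lambda$ by the infimum) but not a full two-sided Harnack inequality. The Caccioppoli/Moser step you describe works with $(u+\delta)^{(p-1-\alpha)/p}$, $\alpha\in(0,p-1)$, which is a small positive power of $u$; iterating this gives control of $L^\lambda$-averages, not of the supremum. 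There is no reason the supremum should tend to zero, and in fact it need not. (The paper sidesteps this entirely by working on the level set $\Omega_\varepsilon=\{u<\varepsilon\}$ via Kato's inequality, and by using Lemma~\ref{L3.4} to show $\Omega_\varepsilon$ eventually occupies a large fraction of each annulus.)

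The second and decisive problem is the final contradiction. You write that under~\eqref{T2.1.1} the quantity $\int_{B_R(x_k)} f(u)\,u^{-1-q}\,dx$, with $q = n(p-1)/(n-p)$, is ``$+\infty$ whenever $u>0$ on a set of positive measure.'' That is not true. The hypothesis~\eqref{T2.1.1} is a one-dimensional integral over the value $\zeta$ near $0$, whereas the quantity you wish to bound is an $x$-integral over a ball; if $u$ is bounded below by a positive constant on $B_R(x_k)$ --- which is perfectly consistent with $u$ being an everywhere-positive supersolution --- then $f(u)u^{-1-q}$ is bounded on that ball and the integral is finite, so no contradiction arises. The ``layer-cake representation'' and ``consolidation across $\alpha\in(0,p-1)$'' that are supposed to transfer the Dini hypothesis~\eqref{T2.1.1} from the $\zeta$-variable to the $x$-variable are precisely where the content of the theorem lives, and you do not say how to do it; you explicitly flag it as the main obstacle and leave it unresolved.

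For comparison, the paper's argument avoids the $x$-integral $\int f(u)u^{-1-q}\,dx$ entirely. It introduces the scalar quantity $E(r)=\int_{\Omega_\varepsilon\cap B_r}f(u)\,dx$ and, combining Kato's inequality, the weak Harnack inequality, and the a~priori bound of Lemma~\ref{L3.3}, derives a discrete recursive inequality of the form
$$
E(r_i)-E(r_{i-1}) \ge C\,r_i^{\,n}\,
f\!\left(\sigma\,r_i^{-(n-p)/(p-1)}E^{1/(p-1)}(r_i)\right).
$$
The key structural idea is then to split the dyadic index set into intervals on which the quantity $E(r_i)/r_i^{\,n-p}$ decreases, multiply both sides by an elementary ratio bounded by $1$, and telescope. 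On one side this telescopes to the convergent integral $\int^\infty d\zeta/\zeta^{n/(n-p)}$; on the other side, since $E(r_i)/r_i^{\,n-p}\to 0$ by~\eqref{1.2}, it telescopes to an integral of $\tilde h(\sigma\zeta)/\zeta$ down to $0$, which diverges precisely under hypothesis~\eqref{T2.1.1}. This is where the Dini condition enters in its full generality --- as a divergent one-dimensional integral over $\zeta$ --- and there is no attempt (and no need) to bound an $x$-integral of $f(u)u^{-1-q}$. Your sketch is missing exactly this mechanism for converting the scalar recursion into a comparison with the $\zeta$-integral, and as written the contradiction does not follow.
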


\begin{Theorem}\label{T2.2}
Let $n > p$ and
\begin{equation}
	\int_0^\varepsilon
	\frac{
		f (\zeta)
		\,
		d\zeta
	}{
		\zeta^{1 + n (p - 1) / (n - p)}
	}
	<
	\infty.
	\label{T2.2.1}
\end{equation}
Then problem~\eqref{1.1p}, \eqref{1.2} has a positive solution.
\end{Theorem}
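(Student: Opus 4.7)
The plan is to construct, for any $f$ satisfying~\eqref{T2.2.1}, a radial positive classical solution of $-\Delta_p u = f(u)$ in $\mathbb{R}^n$ whose essential infimum vanishes; since equality implies the inequality~\eqref{1.1p}, such a $u$ proves the theorem. I will use the ansatz $u(x) = \phi(|x|^{-\alpha})$ with $\alpha = (n-p)/(p-1)$, where $|x|^{-\alpha}$ is, up to a constant, the $p$-harmonic fundamental solution in $\mathbb{R}^n\setminus\{0\}$. Writing $y = |x|^{-\alpha}$ and $\sigma = 1 + n(p-1)/(n-p) = p(n-1)/(n-p)$, a direct computation shows that the radial equation $-\Delta_p u = f(u)$ is equivalent to
\[
-(p-1)\alpha^p\, y^\sigma\, \phi'(y)^{p-2}\phi''(y) = f(\phi(y)), \qquad y > 0,
\]
and the substitution $\rho(y) = \phi'(y)^{p-1}$ reduces this to the first-order system $\phi'(y) = \rho(y)^{1/(p-1)}$, $\rho'(y) = -f(\phi(y))/(\alpha^p y^\sigma)$, to be solved with $\phi(0) = 0$ and $\rho(0) = c^{p-1}$ for a small parameter $c > 0$.

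Condition~\eqref{T2.2.1} enters at the singular endpoint $y=0$. Since $\phi(y)\sim cy$ near $0$, the change of variable $\zeta = cs$ gives
\[
\int_0^{y_0} \frac{f(\phi(s))}{s^\sigma}\, ds \;\le\; c^{\sigma-1}\int_0^{c y_0} \frac{f(\zeta)}{\zeta^\sigma}\, d\zeta,
\]
whose right-hand side is finite by~\eqref{T2.2.1}, since $\sigma$ coincides with the exponent $1+n(p-1)/(n-p)$ appearing there. Local existence on some $[0, y_0]$ then follows from a Banach contraction-mapping argument applied to the integral form of the system.

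To extend globally while keeping $\phi\le\varepsilon$ (so that the monotonicity of $f$ on $[0,\varepsilon]$ remains available), I would exploit the fact that $\rho$ is decreasing—giving the pointwise bound $\phi(y)\le cy$—together with the decay estimate $\rho(y)\le f(\varepsilon)/[\alpha^p(\sigma-1)y^{\sigma-1}]$, obtained while $\phi\le\varepsilon$ by integrating $\rho' = -f(\phi)/(\alpha^p y^\sigma)$ from $y$ to $\infty$ and using $\rho(\infty)=0$. Splitting $\phi_\infty = \int_0^\infty \rho^{1/(p-1)}\,ds$ at the crossover $s_1 = \varepsilon/(2c)$ and combining these bounds yields an \emph{a priori} estimate of the form $\phi_\infty \le \varepsilon/2 + C(n,p,f(\varepsilon))\,c^{p/(n-p)}$, which is strictly less than $\varepsilon$ once $c$ is small. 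A standard continuation argument then rules out escape of $\phi$ from $[0,\varepsilon]$ in finite time, producing a global solution with $\phi_\infty<\varepsilon$. Finally, the asymptotics $\phi'(y)\sim y^{-n/(n-p)}$ at $y=\infty$ translate to $|\nabla u(x)|\sim |x|^{1/(p-1)}\to 0$ as $x\to 0$, so $u\in C^1(\mathbb{R}^n)\subset W^{1,p}_{\mathrm{loc}}(\mathbb{R}^n)$ and the equation extends weakly across the origin, while $\phi(y)\to 0$ as $y\to 0$ delivers $u\to 0$ at infinity, hence essential infimum zero. The principal obstacle is the global continuation: one must derive an \emph{a priori} bound on $\phi_\infty$ using only information available on $\{\phi\le\varepsilon\}$ that is itself tight enough to force $\phi<\varepsilon$, and condition~\eqref{T2.2.1} is the indispensable ingredient that makes the $c$-scaling of the change-of-variable estimate work out.
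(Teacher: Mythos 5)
Your change of variables and ODE reduction are correct, and your reading of where condition~\eqref{T2.2.1} enters (the singular endpoint $y=0$) matches the paper's. But the plan has a genuine gap at the global-continuation step, and in fact the strategy of solving the \emph{exact} equation $-\Delta_p u = f(u)$ cannot work in general. The decay estimate $\rho(y)\le f(\varepsilon)/[\alpha^p(\sigma-1)y^{\sigma-1}]$ is obtained ``by integrating from $y$ to $\infty$ and using $\rho(\infty)=0$,'' but $\rho(\infty)=0$ is a boundary condition at the far end of the shooting interval, not a consequence of the initial data $\rho(0)=c^{p-1}$. In fact, for small $c$ it is false: while $\phi\le\varepsilon$ one has $\phi(s)\le\min(cs,\varepsilon)$, so
$$
\rho(0)-\rho(y)=\frac{1}{\alpha^p}\int_0^y\frac{f(\phi(s))}{s^\sigma}\,ds
\le\frac{1}{\alpha^p}\left(c^{\sigma-1}\int_0^\varepsilon\frac{f(\zeta)}{\zeta^\sigma}\,d\zeta
+\frac{f(\varepsilon)}{\sigma-1}\Bigl(\frac{c}{\varepsilon}\Bigr)^{\sigma-1}\right)=O\bigl(c^{\sigma-1}\bigr),
$$
and since $\sigma-1=n(p-1)/(n-p)>p-1$, this drop is $o(c^{p-1})=o(\rho(0))$. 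Hence $\rho$ stays bounded below by $c^{p-1}/2$ on the whole interval where $\phi\le\varepsilon$, so $\phi$ grows linearly and does leave $[0,\varepsilon]$; your \emph{a priori} bound $\phi_\infty\le\varepsilon/2+Cc^{p/(n-p)}$ collapses. Translated back, for small $c$ the IVP produces $u(r)\sim c\, r^{-(n-p)/(p-1)}$ near the origin, which is the fundamental-solution singularity and lies outside $W^{1,p}_{\rm loc}$ for $n>p$, so it is not even an admissible weak solution. Hitting $\rho(\infty)=0$ requires tuning $c$, not taking it small, and for a large class of $f$ satisfying~\eqref{T2.2.1} (e.g.\ $f(u)=u^q$ with $n(p-1)/(n-p)<q<n(p-1)/(n-p)+p/(n-p)$) the Gidas--Spruck/Serrin--Zou nonexistence theorems show that no such tuning can succeed: the equation $-\Delta_p u=f(u)$ has \emph{no} positive entire solution in this subcritical range. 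The theorem only claims a solution of the \emph{inequality}.

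This is precisely why the paper does not attempt a shooting/fixed-point construction for the equation. Instead it freezes the nonlinearity along an explicit barrier $r\mapsto\varepsilon(1+r/\delta)^{-(n-p)/(p-1)}$ of fundamental-solution type, writes down the resulting explicit radial solution $w_\delta$ of the linear-in-$u$ problem $-\Delta_p u = f(\text{barrier})$ by a double integral, and then uses~\eqref{T2.2.1} only to show $\sup w_\delta\to 0$ as $\delta\to0^+$, so that for small $\delta$ the barrier dominates $w_\delta$ pointwise; monotonicity of $f$ on $[0,\varepsilon]$ then upgrades $w_\delta$ to a supersolution. That sidesteps both the boundary condition $\rho(\infty)=0$ and the nonexistence obstruction entirely. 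If you want to salvage your approach, you would have to give up on solving the equation and instead run a monotone-iteration or barrier argument to produce a supersolution with $u\le\varepsilon$ and $u\to0$ at infinity, which is essentially what the paper does in closed form.
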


Theorems~\ref{T2.1} and~\ref{T2.2} are proved in Section~\ref{proof}. 
Now we demonstrate their applications.

\begin{Example}\label{E2.1}
Consider the inequality
\begin{equation}
	- \operatorname{div} A (x, \nabla u)
	\ge
	u^\lambda
	\quad
	\mbox{in } {\mathbb R}^n,
	\label{E2.1.1}
\end{equation}
where $n > p$ and $\lambda$ is a real number.
Replacing if necessary the function $u$ with $u - \alpha$, where $\alpha$ is defined by~\eqref{1.3} and applying Theorem~\ref{T2.1}, we conclude that, for 
\begin{equation}
	\lambda
	\le
	\frac{
		n (p - 1)
	}{
		n - p
	},
	\label{E2.1.2}
\end{equation}
any non-negative solution of~\eqref{E2.1.1} is identically zero.
Condition~\eqref{E2.1.2} is exact. Really, by Theorem~\ref{T2.2}, in the case of
$$
	\lambda
	>
	\frac{
		n (p - 1)
	}{
		n - p
	},
$$
the inequality
$$
	- \Delta_p u
	\ge
	u^\lambda
	\quad
	\mbox{in } {\mathbb R}^n,
$$
has a positive solution.

We note that~\eqref{E2.1.2} coincides with the analogous condition obtained in~\cite{ASComPDE, AShal}.
Earlier in~\cite{MPbook} it was shown that the blow-up of solutions of~\eqref{E2.1.1} is occur if
$$
	p - 1
	<
	\lambda
	\le
	\frac{
		n (p - 1)
	}{
		n - p
	}.
$$
\end{Example}

\begin{Example}\label{E2.2}
Let us examine the critical exponent 
$
	\lambda 
	= 
	n (p - 1) / (n - p)
$
in~\eqref{E2.1.2}. Consider the inequality
\begin{equation}
	- \operatorname{div} A (x, \nabla u)
	\ge
	u^{
		n (p - 1) / (n - p)
	}
	\log^\mu
	\left(
		e
		+
		\frac{1}{u}
	\right)
	\quad
	\mbox{in } {\mathbb R}^n,
	\label{E2.2.1}
\end{equation}
where $n > p$. For $u = 0$, we assume that the right-hand side of~\eqref{E2.2.1} equals to zero.

According to Theorem~\ref{T2.1}, if
\begin{equation}
	\mu \ge - 1,
	\label{E2.2.2}
\end{equation}
then any non-negative solution of~\eqref{E2.2.1} is identically zero.
At the same time, Theorem~\ref{T2.2} implies that, in the case of
$$
	\mu < -1,
$$
the inequality 
$$
	- \Delta_p u
	\ge
	u^{
		n (p - 1) / (n - p)
	}
	\log^\mu
	\left(
		e
		+
		\frac{1}{u}
	\right)
	\quad
	\mbox{in } {\mathbb R}^n,
$$
has a positive solution.
Thus, condition~\eqref{E2.2.2} is exact.
\end{Example}

\section{Proof of Theorems~\ref{T2.1} and~\ref{T2.2}}\label{proof}

In this section, by $C$ and $\sigma$ we mean various positive constants that can depend only on $p$, $n$, $\varepsilon$, $\lambda$, and the ellipticity constants $C_1$ and $C_2$. Also let $B_r$ and $S_r$ be the open ball and the sphere of radius $r > 0$ centered at zero and $A_r = B_{2 r} \setminus B_r$.
As is customary, by $\chi_\Omega$ we mean the characteristic function of a set $\Omega \subset {\mathbb R}^n$, i.e.
$$
	\chi_\Omega (x)
	=
	\left\{
		\begin{array}{ll}
			1,
			&
			x
			\in
			\Omega,
			\\
			0,
			&
			x
			\not\in
			\Omega.
		\end{array}
	\right.
$$

We need a few preliminary statements.

\begin{Lemma}[Generalized Kato's inequality]\label{L3.1}
Let $v \in W_{p, loc}^1 (\Omega)$ be a solution of the inequality
$$
	\operatorname{div} A (x, \nabla v)
	\ge
	a (x)
	\quad
	\mbox{in } \Omega,
$$
where $\Omega \subset {\mathbb R}^n$ is a non-empty open set and $a \in L_{1, loc} (\Omega)$.
Then the function $v_+ = \chi_{\Omega_+} v$ is a solution of the inequality
$$
	\operatorname{div} A (x, \nabla v_+)
	\ge
	\chi_{\Omega_+} (x)
	a (x)
	\quad
	\mbox{in } \Omega,
$$
where $\Omega_+ = \{ x \in \Omega : v (x) > 0 \}$. 
\end{Lemma}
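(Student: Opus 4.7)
My plan is to approximate the characteristic function $\chi_{\Omega_+}$ by a smooth function of $v$ and plug it into the weak inequality. Let $\eta_\delta : \mathbb{R} \to [0,1]$ be a smooth nondecreasing function with $\eta_\delta \equiv 0$ on $(-\infty, 0]$ and $\eta_\delta \equiv 1$ on $[\delta, \infty)$. For any non-negative $\varphi \in C_0^\infty(\Omega)$, the product $\eta_\delta(v)\varphi$ is a non-negative element of $W^{1,p}(\Omega)$ with compact support, so a standard density argument (mollify $v$, test with the smooth approximation, pass to the limit, using that $\eta_\delta$ is Lipschitz) shows it is admissible as a test function in the weak formulation of $\operatorname{div} A(x,\nabla v) \ge a(x)$.

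Applying the chain rule and distributing the gradient yields
\[
-\int_\Omega \eta_\delta'(v)\, A(x,\nabla v)\cdot \nabla v\, \varphi\,dx - \int_\Omega \eta_\delta(v)\, A(x,\nabla v)\cdot \nabla \varphi\,dx \ge \int_\Omega \eta_\delta(v)\, a(x)\, \varphi\,dx.
\]
The ellipticity bound $\xi \cdot A(x,\xi) \ge C_1 |\xi|^p$ makes the integrand in the first term pointwise non-negative, so this term is non-positive and can be discarded, leaving
\[
-\int_\Omega \eta_\delta(v)\, A(x,\nabla v)\cdot \nabla \varphi\,dx \ge \int_\Omega \eta_\delta(v)\, a(x)\, \varphi\,dx.
\]

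The next step is to send $\delta \to 0^+$. Pointwise a.e. on $\Omega$ one has $\eta_\delta(v) \to \chi_{\Omega_+}$, and the integrands are dominated by $C_2 |\nabla v|^{p-1}|\nabla \varphi|$ and $|a||\varphi|$ respectively, both integrable since $\varphi$ has compact support, $v \in W^{1,p}_{loc}(\Omega)$, and $a \in L_{1,loc}(\Omega)$. Dominated convergence therefore gives
\[
-\int_\Omega \chi_{\Omega_+}(x)\, A(x,\nabla v)\cdot \nabla \varphi\,dx \ge \int_\Omega \chi_{\Omega_+}(x)\, a(x)\, \varphi\,dx.
\]
To recast the left-hand side in terms of $v_+$, I invoke Stampacchia's chain rule: $v_+ \in W^{1,p}_{loc}(\Omega)$ with $\nabla v_+ = \chi_{\Omega_+}\nabla v$ a.e. Combined with $A(x,0) = 0$, which is immediate from $|A(x,\xi)| \le C_2 |\xi|^{p-1}$, this gives $A(x,\nabla v_+) = \chi_{\Omega_+}\, A(x,\nabla v)$ a.e., so the displayed inequality is exactly the weak formulation claimed for $v_+$.

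The only nontrivial point is justifying $\eta_\delta(v)\varphi$ as an admissible test function, and this is routine once one mollifies $v$ and exploits the Lipschitz regularity of $\eta_\delta$. Everything else amounts to the ellipticity sign and dominated convergence; I expect no hidden difficulty.
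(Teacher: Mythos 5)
The paper does not give its own proof of this lemma; it is quoted from \cite[Lemma~4.2]{meJMAA_2007}, so there is no in-text argument to compare against. Your proof is correct and is the standard Kato-type truncation argument: test the weak formulation with $\eta_\delta(v)\varphi$ (justified by a mollification/density step), observe that the term $\int \eta_\delta'(v)\,A(x,\nabla v)\cdot\nabla v\,\varphi\,dx$ is nonnegative by the ellipticity bound and can be discarded, pass to the limit $\delta\to 0^+$ by dominated convergence, and finally rewrite the limit using Stampacchia's chain rule together with $A(x,0)=0$ (which indeed follows from $|A(x,\xi)|\le C_2|\xi|^{p-1}$) to get $A(x,\nabla v_+)=\chi_{\Omega_+}A(x,\nabla v)$ a.e. All the delicate points — sign of the discarded term, integrable majorants, and the identification of $\nabla v_+$ — are handled properly.
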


Lemma~\ref{L3.1} is proved in~\cite[Lemma~4.2]{meJMAA_2007}.
Applying Lemma~\ref{L3.1} with $v = \varepsilon - u$, we arrive at Corollary~\ref{C3.1} given below.

\begin{Corollary}\label{C3.1}
Let $u$ be a solution of~\eqref{1.1g}. Then 
$
	u_\varepsilon 
	= 
	\chi_{\Omega_\varepsilon} 
	u 
	+ 
	(1 - \chi_{\Omega_\varepsilon})
	\varepsilon
$
is a solution of the inequality
$$
	- \operatorname{div} A (x, \nabla u_\varepsilon)
	\ge
	\chi_{\Omega_\varepsilon} (x)
	f (u)
	\quad
	\mbox{in } {\mathbb R}^n,
$$
where $\Omega_\varepsilon = \{ x \in {\mathbb R}^n : u (x) < \varepsilon \}$.
\end{Corollary}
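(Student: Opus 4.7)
The plan is to apply Lemma~\ref{L3.1} after two cosmetic adjustments: flipping the sign of the unknown so that the set $\{u < \varepsilon\}$ becomes the positivity set required by the lemma, and replacing the vector field $A$ by a twin with the opposite orientation but the same ellipticity bounds. Concretely, I would set $v = \varepsilon - u$, so that $v \in W_{p,\operatorname{loc}}^1(\mathbb{R}^n)$, $\nabla v = -\nabla u$, and $\Omega_+ := \{v > 0\}$ coincides exactly with $\Omega_\varepsilon$.

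Next, define $\tilde A(x,\xi) = -A(x,-\xi)$ and observe that $\tilde A$ is still Caratheodory and satisfies the same structural bounds $C_1|\xi|^p \le \xi\tilde A(x,\xi)$ and $|\tilde A(x,\xi)| \le C_2|\xi|^{p-1}$. From the weak formulation of \eqref{1.1g} applied to non-negative test functions $\varphi \in C_0^\infty(\mathbb{R}^n)$ one has $\int \tilde A(x,\nabla v)\nabla\varphi\,dx = -\int A(x,\nabla u)\nabla\varphi\,dx \le -\int f(u)\varphi\,dx$, which is exactly the statement that $\operatorname{div}\tilde A(x,\nabla v) \ge f(u)$ in the distributional sense, with $f(u) \in L_{1,\operatorname{loc}}(\mathbb{R}^n)$ by the definition of solution.

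Now I invoke Lemma~\ref{L3.1} (applied to $\tilde A$, the function $v$, $a(x)=f(u(x))$, and $\Omega=\mathbb{R}^n$) to conclude that $v_+ = \chi_{\Omega_\varepsilon} v$ satisfies $\operatorname{div}\tilde A(x,\nabla v_+) \ge \chi_{\Omega_\varepsilon}(x)f(u(x))$. To unwind, a direct pointwise computation gives
$$
	\varepsilon - u_\varepsilon
	=
	\varepsilon - \chi_{\Omega_\varepsilon} u - (1-\chi_{\Omega_\varepsilon})\varepsilon
	=
	\chi_{\Omega_\varepsilon}(\varepsilon - u)
	=
	v_+,
$$
hence $\nabla u_\varepsilon = -\nabla v_+$ and $\tilde A(x,\nabla v_+) = -A(x,-\nabla v_+) = -A(x,\nabla u_\varepsilon)$. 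Substituting into the inequality for $v_+$ yields $-\operatorname{div}A(x,\nabla u_\varepsilon) \ge \chi_{\Omega_\varepsilon}(x)f(u(x))$, which is the claim.

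The only real subtlety is the sign manipulation in step two: Lemma~\ref{L3.1} is stated with a fixed vector field of divergence form obeying the given ellipticity bounds, so one must verify that the sign-flipped field $\tilde A$ is admissible in the lemma. That verification is immediate from the definitions, but it is precisely what makes the substitution $v = \varepsilon - u$ legitimate despite the apparent mismatch in sign between the hypothesis of the lemma and the original inequality~\eqref{1.1g}.
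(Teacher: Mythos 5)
Your proof is correct and follows essentially the same route as the paper, which simply says ``Applying Lemma~\ref{L3.1} with $v=\varepsilon-u$.'' You helpfully make explicit the one point the paper leaves implicit, namely that one must pass to the reflected field $\tilde A(x,\xi)=-A(x,-\xi)$ (which satisfies the same structural bounds) so that the sign conventions of Lemma~\ref{L3.1} match the inequality~\eqref{1.1g}.
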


\begin{Lemma}[Weak Harnack inequality]\label{L3.2}
Let $n > p$ and $u \ge 0$ be a solution of~\eqref{1.4}.
Then 
$$
	\left(
		\frac{
			1
		}{
			\mes B_{2 r}
		}
		\int_{
			B_{2 r}
		}
		u^\lambda
		\,
		dx
	\right)^{1 / \lambda}
	\le
	C
	\operatorname*{ess\,inf}\limits_{
		B_r
	}
	u
$$
for all $\lambda \in (0, n (p - 1) / (n - p))$ and $r \in (0, \infty)$.
\end{Lemma}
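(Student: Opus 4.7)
This is the classical Trudinger--Serrin weak Harnack inequality for quasilinear supersolutions, so the plan is to follow a Moser-iteration plus John--Nirenberg scheme. I would fix $\delta>0$, a concentric pair of balls $B_{\rho}\subset B_{\rho'}\subset B_{2r}$, and a smooth radial cutoff $\eta$ supported in $B_{\rho'}$ with $\eta\equiv 1$ on $B_{\rho}$. Testing the inequality $-\diver A(x,\nabla u)\ge 0$ with $\varphi=(u+\delta)^{\beta}\eta^{p}$ for $\beta\in\mathbb{R}\setminus\{0,1-p\}$, expanding $\nabla\varphi$, invoking the ellipticity bounds $\xi A(x,\xi)\ge C_{1}|\xi|^{p}$ and $|A(x,\xi)|\le C_{2}|\xi|^{p-1}$, and absorbing one gradient term by Young's inequality would yield the Caccioppoli estimate
$$
\int(u+\delta)^{\beta-1}|\nabla u|^{p}\eta^{p}\,dx
\le
\frac{C}{|\beta|^{p}}\int(u+\delta)^{\beta+p-1}|\nabla\eta|^{p}\,dx.
$$
Setting $\gamma=\beta+p-1$ and $w=(u+\delta)^{\gamma/p}$ would convert this into an $L^{p}$ bound for $\nabla w$ in terms of $w|\nabla\eta|$.

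\textbf{Sobolev and Moser iteration.} Next, applying the Sobolev inequality to $w\eta$ with $p^{\ast}=np/(n-p)$ would give the reverse-H\"older estimate
$$
\left(\int_{B_{\rho}}(u+\delta)^{\gamma n/(n-p)}\,dx\right)^{(n-p)/n}
\le
\frac{C}{(\rho'-\rho)^{p}}\int_{B_{\rho'}}(u+\delta)^{\gamma}\,dx.
$$
Iterating on a dyadic sequence of radii in $[r,2r]$ and letting $\delta\to 0$ would produce two separate conclusions. For $\gamma<0$ the iteration converges to the essential infimum, so for any fixed $q_{0}>0$,
$$
\left(\frac{1}{\mes B_{2r}}\int_{B_{2r}}u^{-q_{0}}\,dx\right)^{-1/q_{0}}
\le
C\essinf_{B_{r}}u.
$$
For $0<\gamma<p-1$ the iteration runs in the opposite direction and gives, for every $0<q_{0}<\lambda<n(p-1)/(n-p)$,
$$
\left(\frac{1}{\mes B_{2r}}\int_{B_{3r/2}}u^{\lambda}\,dx\right)^{1/\lambda}
\le
C\left(\frac{1}{\mes B_{2r}}\int_{B_{2r}}u^{q_{0}}\,dx\right)^{1/q_{0}};
$$
the threshold $\lambda<n(p-1)/(n-p)$ is precisely where the iterated exponents $\gamma_{k}=\gamma_{k-1}n/(n-p)$ fail to stay below the borderline $p-1$, which explains the sharp exponent appearing in Theorems~\ref{T2.1} and~\ref{T2.2}.

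\textbf{Bridge and main obstacle.} To close the chain I would need to compare the $L^{q_{0}}$ and $L^{-q_{0}}$ averages of $u$ on $B_{2r}$, and this is the hard part of the argument. The difficulty is that the natural test value $\beta=-(p-1)$ is exactly where the change of variables $w=(u+\delta)^{\gamma/p}$ degenerates, so the Moser machinery above does not directly apply. Instead, I would insert $\log(u+\delta)\cdot\eta^{p}$ at this borderline and derive a BMO-type bound
$$
\frac{1}{\mes B_{2r}}\int_{B_{2r}}|\log u-c_{r}|^{p}\,dx\le C
$$
for a suitable constant $c_{r}$. The John--Nirenberg inequality would then supply a small $q_{0}>0$, depending only on $n$, $p$, $C_{1}$, $C_{2}$, for which the $L^{q_{0}}$ and $L^{-q_{0}}$ means of $u$ over $B_{2r}$ are comparable up to a universal constant; concatenating this comparison with the two displayed iteration bounds completes the proof. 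The smallness of $q_{0}$ enforced by John--Nirenberg, together with the threshold observed in the Moser step, is exactly what caps the weak Harnack inequality at the critical exponent $n(p-1)/(n-p)$.
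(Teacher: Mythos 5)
The paper does not prove Lemma~\ref{L3.2} itself but simply cites Serrin, Trudinger, and Trudinger--Wang; your Moser-iteration-plus-John--Nirenberg sketch is precisely the argument contained in those references, so the approaches coincide. One small correction: at the borderline $\beta=1-p$ the classical test function is $(u+\delta)^{1-p}\eta^{p}$ (not $\log(u+\delta)\,\eta^{p}$), which is what produces the $L^{p}$ bound on $\nabla\log(u+\delta)$ and hence the BMO estimate for $\log u$; with that adjustment your plan is the standard one.
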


\begin{Lemma}\label{L3.3}
Let $u \ge 0$ be a solution of the inequality
$$
	- \operatorname{div} A (x, \nabla u)
	\ge
	a (x)
	\quad
	\mbox{in } {\mathbb R}^n,
$$
where $a \in L_{1, loc} ({\mathbb R}^n)$ is a non-negative function.
If $u^\lambda \in L_{1, loc} ({\mathbb R}^n)$ for some $\lambda \in (p - 1, \infty)$, then
$$
	\frac{1}{\mes B_r}
	\int_{B_r}
	a (x)
	\,
	dx
	\le
	C
	r^{- p}
	\left(
		\frac{
			1
		}{
			\mes A_r
		}
		\int_{
			A_r
		}
		u^\lambda
		\,
		dx
	\right)^{(p - 1) / \lambda}
$$
for all $r \in (0, \infty)$.
\end{Lemma}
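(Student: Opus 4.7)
The plan is to use the weak formulation with test function $\varphi=\xi^\kappa$ to reduce $\int_{B_r} a\,dx$ to an annular integral of $|\nabla u|^{p-1}$, and then to combine H\"older's inequality with a Caccioppoli-type bound for the $A$-superharmonic function $u$ to obtain the desired estimate.

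First, fix a cutoff $\xi\in C_0^\infty(B_{2r})$ with $0\le\xi\le 1$, $\xi\equiv 1$ on $B_r$, and $|\nabla\xi|\le C/r$, and let $\kappa$ be a sufficiently large exponent. Testing the weak formulation against $\varphi=\xi^\kappa$ gives, since $\nabla\xi$ is supported in $A_r$,
$$
	\int_{B_r} a\,dx
	\le
	\int a\xi^\kappa\,dx
	\le
	\kappa\int A(x,\nabla u)\cdot\nabla\xi\,\xi^{\kappa-1}\,dx
	\le
	C\int_{A_r}|\nabla u|^{p-1}\xi^{\kappa-1}|\nabla\xi|\,dx.
$$

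The task reduces to bounding $\int_{A_r}|\nabla u|^{p-1}\xi^{\kappa-1}|\nabla\xi|\,dx$ by a power of $\int_{A_r}u^\lambda\,dx$. Introduce an auxiliary parameter $\delta>0$ and split the integrand as $\bigl(|\nabla u|^{p-1}(u+\delta)^{-\gamma}\xi^A\bigr)\cdot\bigl((u+\delta)^\gamma\xi^{\kappa-1-A}|\nabla\xi|\bigr)$. H\"older's inequality with conjugate exponents $p/(p-1)$ and $p$ gives
$$
	\int_{A_r}|\nabla u|^{p-1}\xi^{\kappa-1}|\nabla\xi|\,dx
	\le
	\left(\int|\nabla u|^p(u+\delta)^{-\gamma p/(p-1)}\xi^{Ap/(p-1)}\,dx\right)^{(p-1)/p}
	\left(\int(u+\delta)^{\gamma p}\xi^{(\kappa-1-A)p}|\nabla\xi|^p\,dx\right)^{1/p}.
$$
The first factor is controlled by the Caccioppoli-type inequality
$$
	\int|\nabla u|^p(u+\delta)^{-\alpha-1}\eta^p\,dx
	\le
	C\int(u+\delta)^{p-\alpha-1}|\nabla\eta|^p\,dx,\qquad \alpha>0,
$$
which follows by substituting $\varphi=(u+\delta)^{-\alpha}\eta^p$ into the weak formulation of $-\operatorname{div}A(x,\nabla u)\ge 0$ and applying Young's inequality to absorb the cross term. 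Aligning parameters by setting $\alpha+1=\gamma p/(p-1)$ and $\eta=\xi^{A/(p-1)}$ (with $A\ge p-1$) reduces the first factor to a constant multiple of $\bigl(\int(u+\delta)^{p-\alpha-1}\xi^{Ap/(p-1)-p}|\nabla\xi|^p\,dx\bigr)^{(p-1)/p}$.

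It remains to choose the free parameters so that the two factors collapse into a power of $\int_{A_r}(u+\delta)^\lambda\,dx$. Two sub-cases are needed: when $\lambda<p(p-1)$ one takes $\gamma=\lambda/p$, so that the second factor already carries the power $\lambda$; when $\lambda\ge p(p-1)$ one takes $\alpha=p-1$ (equivalently $\gamma=p-1$), so that after Caccioppoli the first factor is free of $u+\delta$. A further H\"older's inequality against the measure $|\nabla\xi|^p\,dx$, with exponent $\lambda$ divided by the remaining power of $u+\delta$, converts the surviving integrals into powers of $\int_{A_r}(u+\delta)^\lambda\,dx$. Collecting the powers of $r$ via $|\nabla\xi|\le C/r$ and $\mes A_r\le Cr^n$, and passing to the limit $\delta\to 0$ by monotone convergence (using $a\ge 0$ and $u^\lambda\in L_{1,loc}$), one arrives at the stated estimate. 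The principal obstacle is the bookkeeping of exponents together with the case analysis around $\lambda=p(p-1)$; the regularization $\delta>0$ is essential because the Caccioppoli step involves negative powers of $u$ which would otherwise be ill-defined on $\{u=0\}$.
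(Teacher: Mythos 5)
The paper does not prove Lemma~\ref{L3.3} itself; it refers to D'Ambrosio--Mitidieri~\cite{AM} for the proof. Your argument is a correct, self-contained proof along the standard ``nonlinear capacity'' / test-function lines that the cited reference also follows, so it is essentially the same approach. Two minor checks confirm the bookkeeping: with $\gamma=\lambda/p$ the Caccioppoli requirement $\alpha=\gamma p/(p-1)-1>0$ is exactly $\lambda>p-1$, and the second H\"older exponent $\lambda/(p-\lambda/(p-1))$ is $\ge 1$ precisely when $\lambda>p-1$, with the dichotomy at $\lambda=p(p-1)$ ensuring $p-\lambda/(p-1)\ge0$; a direct scaling count with $|\nabla\xi|\le C/r$, $\mes A_r\sim r^n$ yields in both cases the combined power $r^{n-p}\bigl(r^{-n}\int_{A_r}(u+\delta)^\lambda\bigr)^{(p-1)/\lambda}$, and the $\delta\to0$ limit is justified since $(u+\delta)^\lambda\downarrow u^\lambda$ with $(u+1)^\lambda\in L_{1,loc}$. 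One cosmetic point: you overload the symbol $A$, using it both for the ellipticity function $A(x,\xi)$ and as a free exponent in the cutoff $\xi^A$; renaming the latter would avoid confusion.
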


\begin{Lemma}\label{L3.4}
Let $n > p$ and $u \ge 0$ be a solution of~\eqref{1.4}, \eqref{1.2}. Then
$$
	\lim_{r \to \infty}
	\frac{
		\mes \Omega_\varepsilon \cap B_r
	}{
		\mes B_r
	}
	=
	1,
$$
where $\Omega_\varepsilon = \{ x \in {\mathbb R}^n : u (x) < \varepsilon \}$.
\end{Lemma}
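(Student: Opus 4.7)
The plan is to argue by contradiction using the weak Harnack inequality (Lemma~\ref{L3.2}) together with the monotonicity of the essential infimum with respect to the domain. Intuitively, if a non-trivial fraction of a large ball $B_r$ were occupied by points with $u \ge \varepsilon$, then the $L^\lambda$-average of $u$ over $B_r$ would be bounded below, and the weak Harnack inequality would force the essential infimum of $u$ over $B_{r/2}$ to stay uniformly positive; letting $r \to \infty$ would contradict~\eqref{1.2}.

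In detail, I would fix any $\lambda \in (0, n(p-1)/(n-p))$ and suppose, for contradiction, that there exist $\delta > 0$ and a sequence $r_k \to \infty$ with
$$
\frac{\mes (B_{r_k} \setminus \Omega_\varepsilon)}{\mes B_{r_k}} \ge \delta.
$$
On $B_{r_k} \setminus \Omega_\varepsilon$ one has $u \ge \varepsilon$, so
$$
\frac{1}{\mes B_{r_k}} \int_{B_{r_k}} u^\lambda \, dx \ge \delta \varepsilon^\lambda.
$$
Applying Lemma~\ref{L3.2} with radius $r_k/2$ in place of $r$ then yields
$$
\delta^{1/\lambda} \varepsilon
\le
\left( \frac{1}{\mes B_{r_k}} \int_{B_{r_k}} u^\lambda \, dx \right)^{1/\lambda}
\le
C \, \essinf_{B_{r_k/2}} u,
$$
so $\essinf_{B_{r_k/2}} u \ge \eta$ with $\eta = C^{-1} \delta^{1/\lambda} \varepsilon > 0$.

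Now I would invoke monotonicity: for any fixed $R > 0$, choose $k$ large enough that $r_k/2 > R$; then $B_R \subset B_{r_k/2}$ gives $\essinf_{B_R} u \ge \essinf_{B_{r_k/2}} u \ge \eta$. Since $R$ is arbitrary, $\essinf_{\mathbb{R}^n} u \ge \eta > 0$, contradicting~\eqref{1.2}. Thus $\mes(B_r \setminus \Omega_\varepsilon)/\mes B_r \to 0$, i.e.\ the claimed limit holds. The main subtlety in this plan is simply verifying that the weak Harnack inequality applies to $u$ itself (which it does, since $u$ solves~\eqref{1.4}); the rest is a bookkeeping argument combining a lower bound on a mean with an essinf lower bound and the domain-monotonicity of essinf.
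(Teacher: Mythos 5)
Your proposal is correct and takes essentially the same route as the paper: both lower-bound the $L^\lambda$-average of $u$ over a large ball via $u \ge \varepsilon$ off $\Omega_\varepsilon$, apply the weak Harnack inequality (Lemma~\ref{L3.2}), and then use~\eqref{1.2} to force that bound to zero. The paper states it as a direct passage to the limit $r \to \infty$ while you phrase it as a contradiction along a subsequence, but the mechanism is identical.
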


Proof of Lemmas~\ref{L3.2} and~\ref{L3.3} can be found in~\cite{Serrin, Trudinger, TW}  and~\cite{AM}, respectively.
Lem\-ma~\ref{L3.4} is given in~\cite[Lemma 3.1]{AMAdvMath}. Proof of this lemma follows immediately from Lemma~\ref{L3.2}.
Indeed, by Lemma~\ref{L3.2}, we have
$$
	\varepsilon
	\left(
		\frac{
			\mes B_{2 r} \setminus \Omega_\varepsilon
		}{
			\mes B_{2 r}
		}
	\right)^{1 / \lambda}
	\le
	\left(
		\frac{
			1
		}{
			\mes B_{2 r}
		}
		\int_{
			B_{2 r}
		}
		u^\lambda
		\,
		dx
	\right)^{1 / \lambda}
	\le
	C
	\operatorname*{ess\,inf}\limits_{
		B_r
	}
	u
$$
with some $\lambda \in (0, n (p - 1) / (n - p))$ for all $r \in (0, \infty)$. 
In the limit as $r \to \infty$, this yields
$$
	\lim_{r \to \infty}
	\frac{
		\mes B_{2 r} \setminus \Omega_\varepsilon
	}{
		r^n
	}
	=
	0.
$$

Corollary~\ref{C3.1} and Lemmas~\ref{L3.2} and~\ref{L3.3} with $\lambda \in (p - 1, p) \cap (0, n (p - 1) / (n - p))$ lead to the following assertion.

\begin{Corollary}\label{C3.2}
Let $n > p$ and $u \ge 0$ be a solution of~\eqref{1.1g}.
Then 
$$
	\frac{1}{\mes B_r}
	\int_{\Omega_\varepsilon \cap B_r}
	f (u)
	\,
	dx
	\le
	C
	r^{- p}
	\left(
		\operatorname*{ess\,inf}\limits_{
			B_r
		}
		u_\varepsilon
	\right)^{p - 1}
$$
for all $r \in (0, \infty)$, 
where 
$\Omega_\varepsilon = \{ x \in {\mathbb R}^n : u (x) < \varepsilon \}$ 
and 
$
	u_\varepsilon 
	= 
	\chi_{\Omega_\varepsilon} 
	u 
	+ 
	(1 - \chi_{\Omega_\varepsilon})
	\varepsilon.
$
\end{Corollary}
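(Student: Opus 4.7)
The plan is to apply the three preceding results in sequence to $u_\varepsilon$. First I would invoke Corollary~\ref{C3.1} to conclude that $u_\varepsilon$ satisfies
$$
    -\operatorname{div} A(x, \nabla u_\varepsilon)
    \ge
    \chi_{\Omega_\varepsilon}(x) f(u)
    \ge 0
    \quad \text{in } {\mathbb R}^n,
$$
so $u_\varepsilon$ is simultaneously a non-negative solution of the auxiliary inequality with right-hand side $a(x) = \chi_{\Omega_\varepsilon}(x) f(u) \in L_{1,loc}({\mathbb R}^n)$ and a non-negative solution of the homogeneous inequality~\eqref{1.4}. Moreover, by construction $0 \le u_\varepsilon \le \varepsilon$, so $u_\varepsilon^\lambda \in L_{1,loc}({\mathbb R}^n)$ for every $\lambda > 0$.

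Next, fix $\lambda \in (p-1, p) \cap (0, n(p-1)/(n-p))$, which is non-empty because $p-1 < n(p-1)/(n-p)$ whenever $n > p$. Applying Lemma~\ref{L3.3} to $u_\varepsilon$ with $a = \chi_{\Omega_\varepsilon} f(u)$ yields
$$
    \frac{1}{\mes B_r}
    \int_{\Omega_\varepsilon \cap B_r}
    f(u) \, dx
    \le
    C r^{-p}
    \left(
        \frac{1}{\mes A_r}
        \int_{A_r} u_\varepsilon^\lambda \, dx
    \right)^{(p-1)/\lambda}.
$$
Then I would apply the weak Harnack inequality (Lemma~\ref{L3.2}) to $u_\varepsilon$, which is legitimate since $u_\varepsilon$ solves~\eqref{1.4}. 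Because $A_r \subset B_{2r}$ and $\mes A_r = (1 - 2^{-n}) \mes B_{2r}$, the mean value over $A_r$ is controlled by the mean value over $B_{2r}$ up to the dimensional factor $(1 - 2^{-n})^{-1}$, so Lemma~\ref{L3.2} gives
$$
    \left(
        \frac{1}{\mes A_r}
        \int_{A_r} u_\varepsilon^\lambda \, dx
    \right)^{1/\lambda}
    \le
    C
    \operatorname*{ess\,inf}\limits_{B_r} u_\varepsilon.
$$
Raising to the power $p-1$ and substituting into the previous estimate yields the desired bound.

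There is no real obstacle here: the argument is essentially a bookkeeping exercise that concatenates the three preceding results. The only minor point that requires attention is the choice of $\lambda$, which must lie simultaneously in the range $(p-1, \infty)$ dictated by Lemma~\ref{L3.3} and in $(0, n(p-1)/(n-p))$ dictated by Lemma~\ref{L3.2}; the additional restriction $\lambda < p$ is harmless and is included only to fix a definite admissible value. The boundedness $u_\varepsilon \le \varepsilon$ ensures the integrability hypothesis of Lemma~\ref{L3.3} automatically, so no separate justification is needed.
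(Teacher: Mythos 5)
Your proof is correct and follows exactly the route the paper indicates: the paper itself simply states that Corollary~\ref{C3.1} together with Lemmas~\ref{L3.2} and~\ref{L3.3} applied with $\lambda \in (p-1,p)\cap(0, n(p-1)/(n-p))$ gives the result. Your write-up fills in precisely those steps, including the correct observation that $u_\varepsilon$ is a bounded nonnegative supersolution of~\eqref{1.4} so both lemmas apply, and the comparison $\mes A_r = (1-2^{-n})\mes B_{2r}$ converting the $A_r$-average into a $B_{2r}$-average.
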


\begin{proof}[Proof of Theorem~$\ref{T2.1}$]
Arguing by contradiction, we assume that $u \ge 0$ is a non-zero solution of~\eqref{1.1g}, \eqref{1.2}.
In view of Lemma~\ref{L3.2}, this solution is positive almost everywhere in ${\mathbb R}^n$.
By Lemma~\ref{L3.4}, there exists a real number $r_0 > 0$ such that
$$
	\mes \Omega_\varepsilon \cap B_r 
	\ge
	C
	r^n
$$
and
$$
	\mes \Omega_\varepsilon \cap A_r 
	\ge
	C
	r^n
$$
for all $r \ge r_0$, where
$\Omega_\varepsilon = \{ x \in {\mathbb R}^n : u (x) < \varepsilon \}$.

We denote $r_i = 2^i r_0$, $i = 1,2,\ldots$.
Also let
$$
	E (r)
	=
	\int_{\Omega_\varepsilon \cap B_r}
	f (u)
	\,
	dx,
	\quad
	r > 0.
$$
Corollary~\ref{C3.2} implies that
$$
	\frac{
		1
	}{
		\mes B_{r_i}
	}
	\int_{
		\Omega_\varepsilon \cap B_{r_i}
	}
	f (u)
	\,
	dx
	\le
	C
	r^{- p}
	\left(
		\operatorname*{ess\,inf}\limits_{
			B_{r_i}
		}
		u_\varepsilon
	\right)^{p - 1},
	\quad
	i = 1,2,\ldots,
$$
whence we immediately obtain
\begin{equation}
	\operatorname*{ess\,inf}\limits_{
		\Omega_\varepsilon \cap B_{r_i}
	}
	u
	=
	\operatorname*{ess\,inf}\limits_{
		B_{r_i}
	}
	u_\varepsilon
	\ge
	\sigma
	\left(
		\frac{
			E (r_i)
		}{
			r_i^{n - p} 
		}
	\right)^{1 / (p - 1)},
	\quad
	i = 1,2,\ldots.
	\label{PT2.1}
\end{equation}
By~\eqref{1.2}, this allows us to assert that
\begin{equation}
	\lim_{i \to \infty}
	\frac{
		E (r_i)
	}{
		r_i^{n - p} 
	}
	=
	0.
	\label{PT2.3}
\end{equation}
Since $f$ is a non-decreasing function on the interval $[0, \varepsilon]$,
formula~\eqref{PT2.1} yields
$$
	f
	\left(
		\operatorname*{ess\,inf}\limits_{
			\Omega_\varepsilon \cap B_{r_i}
		}
		u
	\right)
	\ge
	f
	\left(
		\sigma
		r_i^{- (n - p) / (p - 1)}
		E^{1 / (p - 1)} (r_i)
	\right),
	\quad
	i = 1,2,\ldots.
$$
In view of the evident inequality
$$
	\frac{
		1
	}{
		\operatorname{mes} \Omega_\varepsilon \cap A_{r_{i-1}}
	}
	\int_{
		\Omega_\varepsilon \cap A_{r_{i-1}}
	}
	f (u)
	\,
	dx
	\ge
	f
	\left(
		\operatorname*{ess\,inf}\limits_{
			\Omega_\varepsilon \cap B_{r_i}
		}
		u
	\right)
$$
and the choice of the real number $r_0$, this implies that
$$
	\int_{
		\Omega_\varepsilon \cap A_{r_{i-1}}
	}
	f (u)
	\,
	dx
	\ge
	C
	r_i^n
	f
	\left(
		\sigma
		r_i^{- (n - p) / (p - 1)}
		E^{1 / (p - 1)} (r_i)
	\right)
$$
or, in other words,
$$
	E (r_i) - E (r_{i-1})
	\ge
	C
	r_i^n
	f
	\left(
		\sigma
		r_i^{- (n - p) / (p - 1)}
		E^{1 / (p - 1)} (r_i)
	\right),
	\quad
	i = 1,2,\ldots.
$$
The last estimate, in turn, yields
\begin{equation}
	\frac{
		E (r_i) - E (r_{i-1})
	}{
		E^{n / (n - p)} (r_i)
	}
	\ge
	C
	h
	\left(
		\sigma
		r_i^{- (n - p) / (p - 1)}
		E^{1 / (p - 1)} (r_i)
	\right),
	\quad
	i = 1,2,\ldots,
	\label{PT2.2}
\end{equation}
where
$$
	h (\zeta)
	=
	\frac{
		f (\zeta)
	}{
		\zeta^{n (p - 1) / (n - p)}
	},
	\quad
	\zeta > 0.
$$
There obviously are sequences of integers $0 < s_j < l_j \le s_{j + 1}$, $j = 1,2,\ldots$, such that
$$
	\frac{
		E (r_{i - 1})
	}{
		r_{i - 1}^{n - p} 
	}
	>
	\frac{
		E (r_i)
	}{
		r_i^{n - p} 
	}
$$
if $i \in \cup_{j=1}^\infty (s_j, l_j]$ and
$$
	\frac{
		E (r_{i - 1})
	}{
		r_{i - 1}^{n - p} 
	}
	\le
	\frac{
		E (r_i)
	}{
		r_i^{n - p} 
	}
$$
otherwise.
Multiplying~\eqref{PT2.2} by the inequality
$$
	1
	\ge
	\frac{
		r_{i - 1}^{- (n - p) / (p - 1)}
		E^{1 / (p - 1)} (r_{i - 1})
		-
		r_i^{- (n - p) / (p - 1)}
		E^{1 / (p - 1)} (r_i)
	}{
		r_{i - 1}^{- (n - p) / (p - 1)}
		E^{1 / (p - 1)} (r_{i - 1})
	},
$$
one can conclude that
\begin{align}
	\frac{
		E (r_i) - E (r_{i-1})
	}{
		E^{n / (n - p)} (r_i)
	}
	\ge
	{}
	&
	\frac{
		C
		h
		\left(
			\sigma
			r_i^{- (n - p) / (p - 1)}
			E^{1 / (p - 1)} (r_i)
		\right)
	}{
		r_{i - 1}^{- (n - p) / (p - 1)}
		E^{1 / (p - 1)} (r_{i - 1})
	}
	\nonumber
	\\
	&
	{}
	\times
	\left(
		r_{i - 1}^{- (n - p) / (p - 1)}
		E^{1 / (p - 1)} (r_{i - 1})
		-
		r_i^{- (n - p) / (p - 1)}
		E^{1 / (p - 1)} (r_i)
	\right)
	\label{PT2.4}
\end{align}
for all $i \in \cup_{j=1}^\infty (s_j, l_j]$.
Since $E$ is a non-decreasing function, we have
$$
	\frac{
		2^{n - p}
		E (r_i)
	}{
		r_i^{n - p} 
	}
	\ge
	\frac{
		E (r_{i - 1})
	}{
		r_{i - 1}^{n - p} 
	}
	>
	\frac{
		E (r_i)
	}{
		r_i^{n - p} 
	}
$$
for all $i \in \cup_{j=1}^\infty (s_j, l_j]$. Hence,
$$
	\int_{
		E (r_{i - 1})
	}^{
		E (r_i)
	}
	\frac{
		d\zeta
	}{
		\zeta^{n / (n - p)}
	}
	\ge
	\frac{
		E (r_i) - E (r_{i-1})
	}{
		E^{n / (n - p)} (r_i)
	}
$$
and
\begin{align*}
	&
	\frac{
		h
		\left(
			\sigma
			r_i^{- (n - p) / (p - 1)}
			E^{1 / (p - 1)} (r_i)
		\right)
	}{
		r_{i - 1}^{- (n - p) / (p - 1)}
		E^{1 / (p - 1)} (r_{i - 1})
	}
	\left(
		r_{i - 1}^{- (n - p) / (p - 1)}
		E^{1 / (p - 1)} (r_{i - 1})
		-
		r_i^{- (n - p) / (p - 1)}
		E^{1 / (p - 1)} (r_i)
	\right)
	\\
	&
	\qquad
	{}
	\ge
	2^{- (n - p) / (p - 1)} 
	\int_{
		r_i^{- (n - p) / (p - 1)}
		E^{1 / (p - 1)} (r_i)
	}^{
		r_{i - 1}^{- (n - p) / (p - 1)}
		E^{1 / (p - 1)} (r_{i - 1})
	}
	\frac{
		\tilde h (\sigma \zeta)
	}{
		\zeta
	}
	\,
	d\zeta
\end{align*}
for all $i \in \cup_{j=1}^\infty (s_j, l_j]$, where
$$
	\tilde h (\zeta)
	=
	\inf_{
		(
			2^{- (n - p) / (p - 1)} 
			\zeta, 
			\,
			\zeta
		)
	}
	h,
	\quad
	\zeta > 0.
$$
Combining this with~\eqref{PT2.4}, we obtain
$$
	\int_{
		E (r_{i - 1})
	}^{
		E (r_i)
	}
	\frac{
		d\zeta
	}{
		\zeta^{n / (n - p)}
	}
	\ge
	C
	\int_{
		r_i^{- (n - p) / (p - 1)}
		E^{1 / (p - 1)} (r_i)
	}^{
		r_{i - 1}^{- (n - p) / (p - 1)}
		E^{1 / (p - 1)} (r_{i - 1})
	}
	\frac{
		\tilde h (\sigma \zeta)
	}{
		\zeta
	}
	\,
	d\zeta
$$
for all $i \in \cup_{j=1}^\infty (s_j, l_j]$, whence it follows that
\begin{equation}
	\sum_{j=1}^\infty
	\int_{
		E (r_{s_j})
	}^{
		E (r_{l_j})
	}
	\frac{
		d\zeta
	}{
		\zeta^{n / (n - p)}
	}
	\ge
	C
	\sum_{j=1}^\infty
	\int_{
		r_{l_j}^{- (n - p) / (p - 1)}
		E^{1 / (p - 1)} (r_{l_j})
	}^{
		r_{s_j}^{- (n - p) / (p - 1)}
		E^{1 / (p - 1)} (r_{s_j})
	}
	\frac{
		\tilde h (\sigma \zeta)
	}{
		\zeta
	}
	\,
	d\zeta.
	\label{PT2.5}
\end{equation}
Taking into account the inequalities
$$
	E (r_{s_{j+1}})
	\ge
	E (r_{l_j}),
	\quad
	j = 1,2,\ldots,
$$
we have
$$
	\int_{
		E (r_{s_1})
	}^\infty
	\frac{
		d\zeta
	}{
		\zeta^{n / (n - p)}
	}
	\ge
	\sum_{j=1}^\infty
	\int_{
		E (r_{s_j})
	}^{
		E (r_{l_j})
	}
	\frac{
		d\zeta
	}{
		\zeta^{n / (n - p)}
	}.
$$
At the same time,~\eqref{PT2.3} and the inequalities
$$
	r_{s_{j+1}}^{- (n - p) / (p - 1)}
	E^{1 / (p - 1)} (r_{s_{j+1}})
	\ge
	r_{l_j}^{- (n - p) / (p - 1)}
	E^{1 / (p - 1)} (r_{l_j}),
	\quad
	j = 1,2,\ldots,
$$
allows us to assert that
$$
	\sum_{j=1}^\infty
	\int_{
		r_{l_j}^{- (n - p) / (p - 1)}
		E^{1 / (p - 1)} (r_{l_j})
	}^{
		r_{s_j}^{- (n - p) / (p - 1)}
		E^{1 / (p - 1)} (r_{s_j})
	}
	\frac{
		\tilde h (\sigma \zeta)
	}{
		\zeta
	}
	\,
	d\zeta
	\ge
	\int_0^{
		r_{s_1}^{- (n - p) / (p - 1)}
		E^{1 / (p - 1)} (r_{s_1})
	}
	\frac{
		\tilde h (\sigma \zeta)
	}{
		\zeta
	}
	\,
	d\zeta.
$$
Hence,~\eqref{PT2.5} implies the estimate
\begin{equation}
	\int_{
		E (r_{s_1})
	}^\infty
	\frac{
		d\zeta
	}{
		\zeta^{n / (n - p)}
	}
	\ge
	C
	\int_0^{
		r_{s_1}^{- (n - p) / (p - 1)}
		E^{1 / (p - 1)} (r_{s_1})
	}
	\frac{
		\tilde h (\sigma \zeta)
	}{
		\zeta
	}
	\,
	d\zeta.
	\label{PT2.6}
\end{equation}
It is easy to see that
$$
	\int_{
		E (r_{s_1})
	}^\infty
	\frac{
		d\zeta
	}{
		\zeta^{n / (n - p)}
	}
	=
	\frac{
		n - p
	}{
		p
	}
	E^{ - p / (n - p)} (r_{s_1})
	<
	\infty.
$$
In so doing,
$$
	\tilde h (\sigma \zeta)
	\ge
	\frac{
		C
		f (\sigma \zeta)
	}{
		\zeta^{n (p - 1) / (n - p)}
	}
$$
for all $\zeta > 0$ in a neighborhood of zero, because $f$ is a non-decreasing function on the interval $[0, \varepsilon]$. Thus, in accordance with~\eqref{T2.1.1} we obtain
$$
	\int_0^{
		r_{s_1}^{- (n - p) / (p - 1)}
		E^{1 / (p - 1)} (r_{s_1})
	}
	\frac{
		\tilde h (\sigma \zeta)
	}{
		\zeta
	}
	\,
	d\zeta
	=
	\infty.
$$
This contradicts~\eqref{PT2.6}. 
\end{proof}

\begin{proof}[Proof of Theorem~$\ref{T2.2}$]
Without loss of generality, it can be assumed that $f (\zeta) > 0$ for all $\zeta \in (0, \varepsilon)$; otherwise we replace the function $f (\zeta)$ by
$
	\max
	\{
		f (\zeta),
		\zeta^{1 + n (p - 1) / (n - p)}
	\}.
$

Let us find the required solution of~\eqref{1.1p}, \eqref{1.2} in the form
$$
	u (x) = w_\delta (|x|),
$$
where
$$
	w_\delta (r)
	=
	\int_r^\infty
	\left(
		\frac{
			1
		}{
			\zeta^{n - 1}
		}
		\int_0^\zeta
		\xi^{n - 1}
		f
		\left(
			\varepsilon
			\left(
				1
				+
				\frac{\xi}{\delta}
			\right)^{- (n - p) / (p - 1)}
		\right)
		d\xi
	\right)^{1 / (p - 1)}
	d\zeta.
$$
Here, $\delta > 0$ is a real number which will be defined later.
We note that the right-hand side of the last equality is well defined for all $\delta \in (0, \infty)$ and, moreover,
\begin{equation}
	\lim_{\delta \to +0}
	\sup_{
		[0, \infty)
	}
	w_\delta
	=
	0
	\label{PT2.2.1}
\end{equation}
and
\begin{equation}
	w_\delta (r)
	\le
	C
	\delta^{n / (p - 1)}
	r^{- (n - p) / (p - 1)}
	\left(
		\int_0^\varepsilon
		\frac{
			f (\zeta)
			\,
			d\zeta
		}{
			\zeta^{1 + n (p - 1) / (n - p)}
		}
	\right)^{1 / (p - 1)}
	\label{PT2.2.2}
\end{equation}
for all $r \in (0, \infty)$.
Really, by change of variables $\zeta = \varepsilon (1 + \xi / \delta)^{- (n - p) / (p - 1)}$, we have
\begin{align*}
	&
	\int_0^\infty
	\xi^{n - 1}
	f
	\left(
		\varepsilon
		\left(
			1
			+
			\frac{\xi}{\delta}
		\right)^{- (n - p) / (p - 1)}
	\right)
	d\xi
	\\
	&
	\quad
	{}
	=
	\frac{p - 1}{n - p}
	\frac{\delta^n}{\varepsilon}
	\int_0^\varepsilon
	\left(
		\left(
			\frac{\varepsilon}{\zeta}
		\right)^{(p - 1) / (n - p)}
		-
		1
	\right)^{n - 1}
	\left(
		\frac{\varepsilon}{\zeta}
	\right)^{(p - 1) / (n - p) + 1}
	f (\zeta)
	\,
	d\zeta
	\\
	&
	\qquad
	{}
	\le
	\frac{p - 1}{n - p}
	\delta^n
	\varepsilon^{n (p - 1) / (n - p)}
	\int_0^\varepsilon
	\frac{
		f (\zeta)
		\,
		d\zeta
	}{
		\zeta^{1 + n (p - 1) / (n - p)}
	}
	<
	\infty,
\end{align*}
whence it follows that
\begin{align*}
	w_\delta (r)
	&
	\le
	\int_r^\infty
	\left(
		\frac{
			1
		}{
			\zeta^{n - 1}
		}
		\int_0^\infty
		\xi^{n - 1}
		f
		\left(
			\varepsilon
			\left(
				1
				+
				\frac{\xi}{\delta}
			\right)^{- (n - p) / (p - 1)}
		\right)
		d\xi
	\right)^{1 / (p - 1)}
	d\zeta
	\\
	&
	{}
	\le
	C
	\delta^{n / (p - 1)}
	\left(
		\int_0^\varepsilon
		\frac{
			f (\zeta)
			\,
			d\zeta
		}{
			\zeta^{1 + n (p - 1) / (n - p)}
		}
	\right)^{1 / (p - 1)}
	\int_r^\infty
	\frac{
		d\zeta
	}{
		\zeta^{(n - 1) / (p - 1)}
	}
\end{align*}
for all $r \in (0, \infty)$.
This proves~\eqref{PT2.2.2}.

To establish the validity of~\eqref{PT2.2.1}, we note that
\begin{equation}
	\lim_{\delta \to +0}
	\sup_{
		[1, \infty)
	}
	w_\delta
	=
	0
	\label{PT2.2.3}
\end{equation}
in accordance with estimate~\eqref{PT2.2.2}.
At the same time, taking into account the equality
$$
	w_\delta (r)
	=
	w_\delta (1)
	+
	\int_r^1
	\left(
		\frac{
			1
		}{
			\zeta^{n - 1}
		}
		\int_0^\zeta
		\xi^{n - 1}
		f
		\left(
			\varepsilon
			\left(
				1
				+
				\frac{\xi}{\delta}
			\right)^{- (n - p) / (p - 1)}
		\right)
		d\xi
	\right)^{1 / (p - 1)}
	d\zeta,
$$
we obtain 
\begin{align}
	w_\delta (r)
	&
	{}
	\le
	w_\delta (1)
	+
	\int_0^1
	\left(
		\frac{
			1
		}{
			\zeta^{n - 1}
		}
		\int_0^\zeta
		\xi^{n - 1}
		f
		\left(
			\varepsilon
			\left(
				1
				+
				\frac{\xi}{\delta}
			\right)^{- (n - p) / (p - 1)}
		\right)
		d\xi
	\right)^{1 / (p - 1)}
	d\zeta
	\nonumber
	\\
	&
	{}
	\le
	w_\delta (1)
	+
	\left(
		\int_0^1
		f
		\left(
			\varepsilon
			\left(
				1
				+
				\frac{\xi}{\delta}
			\right)^{- (n - p) / (p - 1)}
		\right)
		d\xi
	\right)^{1 / (p - 1)}
	\label{PT2.2.4}
\end{align}
for all $r \in [0, 1]$.
Since $f : [0, \infty) \to [0, \infty)$ is a non-decreasing function on the interval $[0, \varepsilon]$ satisfying condition~\eqref{T2.2.1}, we have $f (\zeta) \to 0$ as $\zeta \to +0$. Therefore, the second summand on the right in~\eqref{PT2.2.4} tends to zero as $\delta \to +0$ by Lebesgue's dominated convergence theorem, while $w_\delta (1) \to 0$ as $\delta \to +0$ in accordance with~\eqref{PT2.2.2}.
Hence,~\eqref{PT2.2.4} implies that
$$
	\lim_{\delta \to +0}
	\sup_{
		[0, 1]
	}
	w_\delta
	=
	0.
$$
Combining this with~\eqref{PT2.2.3}, we readily arrive at~\eqref{PT2.2.1}.

Let us fix some $\delta > 0$ such that
\begin{equation}
	\varepsilon
	\left(
		1
		+
		\frac{r}{\delta}
	\right)^{- (n - p) / (p - 1)}
	\ge
	w_\delta (r)
	\label{PT2.2.5}
\end{equation}
for all $r \in [0, \infty)$. Such a real number $\delta > 0$ obviously exists. 
Indeed, in view of~\eqref{PT2.2.1}, there exists $\delta_1 > 0$ such that
$$
	\varepsilon
	\left(
		1
		+
		\frac{r}{\delta}
	\right)^{- (n - p) / (p - 1)}
	\ge
	\varepsilon
	2^{- (n - p) / (p - 1)}
	\ge
	w_\delta (r)
$$
for all $0 \le r < \delta \le \delta_1$.
At the same time, in view of~\eqref{PT2.2.2}, there exists $\delta_2 > 0$ such that
$$
	\varepsilon
	\left(
		1
		+
		\frac{r}{\delta}
	\right)^{- (n - p) / (p - 1)}
	\ge
	\varepsilon
	2^{- (n - p) / (p - 1)}
	\delta^{(n - p) / (p - 1)}
	r^{- (n - p) / (p - 1)}
	\ge
	w_\delta (r)
$$
for all $0 < \delta \le \delta_2$ and $r \ge \delta$.
Hence, we can take $\delta = \min \{ \delta_1, \delta_2 \}$.

By direct differentiation, it can be verified that
$$
	- \frac{
		1
	}{
		r^{n - 1}
	}
	\frac{d}{d r}
	\left(
		r^{n - 1}
		\left|
			\frac{d w_\delta}{d r}
		\right|^{p - 2}
		\frac{d w_\delta}{d r}
	\right)
	=
	f
	\left(
		\varepsilon
		\left(
			1
			+
			\frac{r}{\delta}
		\right)^{- (n - p) / (p - 1)}
	\right)
$$
for all $r \in (0, \infty)$, whence in accordance with~\eqref{PT2.2.5} we have
$$
	- \frac{
		1
	}{
		r^{n - 1}
	}
	\frac{d}{d r}
	\left(
		r^{n - 1}
		\left|
			\frac{d w_\delta}{d r}
		\right|^{p - 2}
		\frac{d w_\delta}{d r}
	\right)
	\ge
	f
	\left(
		w_\delta
	\right).
$$
for all $r \in (0, \infty)$. 
Therefore, the function $u$ is a classical solution of the inequality
$$
	- \Delta_p u
	\ge
	f (u)
	\quad
	\mbox{in } {\mathbb R}^n \setminus \{ 0 \}.
$$
Multiplying this by a non-negative function $\varphi \in C_0^\infty ({\mathbb R}^n)$ and integrating over ${\mathbb R}^n \setminus B_r$, where $r > 0$ is a real number, we obtain
\begin{align}
	\int_{
		{\mathbb R}^n \setminus B_r
	}
	f (u)
	\varphi
	\,
	dx
	\le
	-
	\int_{
		{\mathbb R}^n \setminus B_r
	}
	\Delta_p u
	\varphi
	\,
	dx
	=
	{}
	&
	-
	\int_{
		S_r
	}
	|\nabla u|^{p-2} \frac{\partial u}{\partial \nu}
	\varphi
	\,
	d S_r
	\nonumber
	\\
	&
	{}
	+
	\int_{
		{\mathbb R}^n \setminus B_r
	}
	|\nabla u|^{p-2} 
	\nabla u \nabla \varphi
	\,
	dx,
	\label{PT2.2.6}
\end{align}
where $\nu$ is the unit normal vector at a point of the sphere $S_r$ outer to ${\mathbb R}^n \setminus B_r$.
It is easy to see that
\begin{align*}
	|\nabla u (x)|
	=
	&
	\left|
		\frac{d w_\delta (r)}{d r}
	\right|_{r = |x|}
	=
	\left(
		\frac{
			1
		}{
			|x|^{n - 1}
		}
		\int_0^{|x|}
		\xi^{n - 1}
		f
		\left(
			\varepsilon
			\left(
				1
				+
				\frac{\xi}{\delta}
			\right)^{- (n - p) / (p - 1)}
		\right)
		d\xi
	\right)^{1 / (p - 1)}
	\\
	&
	{}
	\le
	\left(
		\int_0^{|x|}
		f
		\left(
			\varepsilon
			\left(
				1
				+
				\frac{\xi}{\delta}
			\right)^{- (n - p) / (p - 1)}
		\right)
		d\xi
	\right)^{1 / (p - 1)}
	\to
	0
	\quad
	\mbox{as } x \to 0,
\end{align*}
whence we have
$$
	\left|
		\int_{
			S_r
		}
		|\nabla u|^{p-2} \frac{\partial u}{\partial \nu}
		\varphi
		\,
		d S_r
	\right|
	\le
	\int_{
		S_r
	}
	|\nabla u|^{p - 1} 
	|\varphi|
	\,
	d S_r
	\to
	0
	\quad
	\mbox{as } r \to +0
$$
and
$$
	\int_{
		{\mathbb R}^n \setminus B_r
	}
	|\nabla u|^{p-2} 
	\nabla u \nabla \varphi
	\,
	dx
	\to
	\int_{
		{\mathbb R}^n
	}
	|\nabla u|^{p-2} 
	\nabla u \nabla \varphi
	\,
	dx
	\quad
	\mbox{as } r \to +0.
$$
Thus, to complete the proof, it remains to pass in~\eqref{PT2.2.6} to the limit as $r \to +0$.
\end{proof}

\end{document}